\theoremstyle{definition}
\newtheorem{theorem}{Theorem}
\newtheorem{lemma}{Lemma}
\newtheorem{prop}{Proposition}
\newcommand{\C}{C^{\mathrm{rsk}}}
\newcommand{\R}{\Pi^{\mathrm{rsk}}}
\DeclareMathOperator{\is}{is}
\DeclareMathOperator{\maj}{maj}
\DeclareMathOperator{\rsk}{rsk}
\author{Greta Panova}
\title[Bijective permutation enumeration]{Bijective enumeration of permutations starting with a longest increasing subsequence}
\address{panova@math.harvard.edu}
\keywords{permutations, longest increasing subsequence, q-analogue, major index, RSK}
\begin{document}
\maketitle
\begin{abstract}

We prove a formula for the number of permutations in $S_n$ such that their first $n-k$ entries are increasing and their longest increasing subsequence has length $n-k$. This formula first appeared as a consequence of character polynomial calculations in recent work of Adriano Garsia and Alain Goupil. We give two `elementary' bijective proofs of this result and of its $q$-analogue, one proof using the RSK correspondence and one only permutations.


\end{abstract}

\section{Introduction}

In \cite{Garsiapaper}, Adriano Garsia and Alain Goupil derived as a consequence of character polynomial calculations a simple formula for the enumeration of certain permutations. In his talk at the MIT Combinatorics Seminar \cite{Garsiatalk}, Garsia offered a \$100 award for an `elementary' proof of this formula. We give such a proof of this formula and its $q$-analogue.

Let \begin{math}\Pi_{n,k} = \{ w \in S_n | w_1 < w_2 < \cdots< w_{n-k}, \is(w) = n-k\}\end{math}, the set of all permutations $w$ in $S_n$, such that their first $n-k$ entries form an increasing sequence and the longest increasing sequence of $w$ has length $n-k$; where we denote by $\is(w)$ the maximal length of an increasing subsequence of $w$. 

The formula in question is the following theorem originally proven by A. Garsia and A.Goupil \cite{Garsiapaper}.

\begin{theorem}\label{garsia}
If $n \geq 2k$, the number of permutations in $\Pi_{n,k}$ is given by
$$\# \Pi_{n,k} = \sum_{r=0}^{k} (-1)^{k-r} \binom{k}{r} \frac{n!}{(n-r)!}.$$
\end{theorem}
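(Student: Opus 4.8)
The plan is to pass to the Robinson--Schensted--Knuth correspondence $w\mapsto(P(w),Q(w))$, a bijection from $S_n$ onto pairs of standard Young tableaux of a common shape $\lambda\vdash n$, together with Schensted's theorem $\is(w)=\lambda_1$. First I would record an elementary fact about row insertion: feeding $w_1,w_2,\dots$ in turn, a box is adjoined to the first row of the running tableau at step $i$ precisely when $w_i$ exceeds every entry then occupying the first row; a short induction on $i$ then shows that $w_1<w_2<\cdots<w_m$ holds if and only if the first $m$ cells of the first row of $Q(w)$ carry the labels $1,2,\dots,m$. Combined with $\is(w)=\lambda_1$, membership in $\Pi_{n,k}$ becomes: $\lambda_1=n-k$ and the first row of $Q(w)$ is exactly $1,2,\dots,n-k$. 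Equivalently, $\lambda=(n-k,\mu)$ for some $\mu\vdash k$ (the requirement $\mu_1\le n-k$ being automatic from $\mu_1\le k\le n-k$), $P(w)$ is an arbitrary SYT of that shape, and $Q(w)$ is reconstructed from an arbitrary SYT of shape $\mu$ on the alphabet $\{n-k+1,\dots,n\}$ (the column condition against the forced first row is automatic since $\mu_1\le k<n-k+1$). Writing $f^\nu$ for the number of SYT of shape $\nu$ and $(n-k,\mu)$ for the partition with parts $n-k,\mu_1,\mu_2,\dots$, this gives
\[
\#\Pi_{n,k}=\sum_{\mu\vdash k}f^{(n-k,\mu)}\,f^{\mu}.
\]

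Next I would push the same dictionary one notch further. For $0\le j\le k$, a permutation $w$ with $w_1<\cdots<w_{n-k}$ and $\is(w)=n-j$ corresponds to a shape $\lambda=(n-j,\mu)$ with $\mu\vdash j$, an arbitrary $P(w)$, and a $Q(w)$ whose first $n-k$ cells of row $1$ are forced to carry $1,\dots,n-k$ while the remaining cells form a standard skew tableau of shape $(n-j,\mu)/(n-k)$ on $\{n-k+1,\dots,n\}$. This is exactly where the hypothesis $n\ge 2k$ enters: the skew shape $(n-j,\mu)/(n-k)$ \emph{disconnects}, its surviving row-$1$ part sitting in columns $n-k+1,\dots,n-j$ while its rows $2,3,\dots$ sit in columns $1,\dots,\mu_1$ with $\mu_1\le j\le k<n-k+1$, so the two pieces share neither a row nor a column. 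A standard skew tableau of that shape is then just a choice of which $k-j$ of the $k$ available labels fill the length-$(k-j)$ horizontal strip (necessarily in increasing order) together with a SYT of shape $\mu$ on the remaining $j$ labels; hence $f^{(n-j,\mu)/(n-k)}=\binom{k}{k-j}f^{\mu}=\binom{k}{j}f^{\mu}$. Substituting and comparing with the displayed formula above taken with $j$ in place of $k$ yields
\[
\#\{\,w\in S_n:\ w_1<\cdots<w_{n-k},\ \is(w)=n-j\,\}=\binom{k}{j}\,\#\Pi_{n,j}.
\]

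To conclude I would sum over $j=0,1,\dots,k$. The left-hand sides partition $\{w\in S_n:w_1<\cdots<w_{n-k}\}$, since every such $w$ has $\is(w)\ge n-k$ and therefore $\is(w)=n-j$ for a unique $j\in\{0,\dots,k\}$; and this ambient set plainly has $\binom{n}{n-k}k!=n!/(n-k)!$ elements. So $n!/(n-k)!=\sum_{j=0}^{k}\binom{k}{j}\#\Pi_{n,j}$, and the same relation holds with $k$ replaced by any smaller value (all of which satisfy the bound). Binomial inversion now gives
\[
\#\Pi_{n,k}=\sum_{j=0}^{k}(-1)^{k-j}\binom{k}{j}\,\frac{n!}{(n-j)!},
\]
which is the claim.

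The genuinely delicate steps, as opposed to the bookkeeping, are two. First, the insertion lemma translating ``$w$ has an increasing prefix of length $m$'' into ``the first row of $Q(w)$ begins with $1,2,\dots,m$'', which I would establish by induction on the insertion steps, using that the first row gains a cell at a step exactly when the inserted entry is larger than all current first-row entries. Second, the disconnection of $(n-j,\mu)/(n-k)$: this is precisely where $n\ge 2k$ is used, and without it $f^{(n-j,\mu)/(n-k)}$ need not factor as $\binom{k}{j}f^{\mu}$. Schensted's theorem, the shuffle count for the disconnected skew shape, and the final binomial inversion are all routine. (A parallel argument avoiding RSK is possible, replacing the tableau bookkeeping by extracting from each admissible $w$ a canonical length-$(n-j)$ increasing subsequence passing through all of positions $1,\dots,n-k$ together with the set of $k-j$ tail positions it uses; I find the RSK version cleaner to write.)
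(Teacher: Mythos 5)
Your proof is correct, but it takes the route that the paper sets up in Proposition \ref{recurrence} and then deliberately bypasses when proving Theorem \ref{garsia} itself. Concretely, your stratification of $C_{n,k}$ by the value of $\is(w)=n-j$, together with the factorization $f^{(n-j,\mu)/(n-k)}=\binom{k}{j}f^{\mu}$ coming from the disconnection of the skew shape when $n\ge 2k$, is a shape-by-shape restatement of the paper's relabelling bijection $\C_{n,k,s}\leftrightarrow \R_{n,k-s}\times\binom{[n-k+1,\ldots,n]}{s}$; it yields exactly the recurrence $\sum_{j=0}^{k}\binom{k}{j}\#\Pi_{n,j}=n!/(n-k)!$, and you then finish by binomial inversion. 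The paper explicitly flags this path (``the recurrence can be inverted to give the inclusion-exclusion form of Theorem \ref{garsia}'') but declines to stop there, because the inversion is an algebraic step that the bijection does not realize directly; instead it introduces the auxiliary sets $D_{n,k,s}$ of pairs whose recording tableau has first row $1,\ldots,n-k,a_1>\cdots>a_s,b_1<\cdots$ (and is standard only below the first row) and exhibits $\R_{n,k}$ as the nested set difference $\C_{n,k}\setminus\left(D_{n,k,1}\setminus(\cdots\setminus D_{n,k,k})\right)$, so that each term $(-1)^{k-r}\binom{k}{r}\,n!/(n-r)!$ of the alternating sum is the cardinality of an explicit set. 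Your argument is shorter and complete as a proof of the stated identity --- the insertion lemma, the disconnection of $(n-j,\mu)/(n-k)$ under $n\ge 2k$, and the availability of the recurrence for every $k'\le k$ are all handled correctly --- while the paper's longer construction buys a term-by-term combinatorial interpretation of the signed sum, which is closer to the spirit of Garsia's ``bijective'' challenge. Your route also extends to Theorem \ref{garsiaqanalogue} with no extra work: every identification you use fixes the insertion tableau $P$ and hence $\maj(w^{-1})$, and since the coefficients $\binom{k}{j}$ are not $q$-deformed, the same ordinary binomial inversion applies.
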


This formula has a $q-$analogue, also due to Garsia and Goupil.
\begin{theorem} \label{garsiaqanalogue}
For permutations in $\Pi_{n,k}$, if $n \geq 2k$, we have that
\begin{align*}
\sum_{w \in \Pi_{n,k}} q^{\maj(w^{-1})} = \sum_{r=0}^{k}(-1)^{k-r}\binom{k}{r}[n]_q\cdots [n-r+1]_q,
\end{align*}
where \begin{math}\maj(\sigma) = \sum_{i|\sigma_i>\sigma_{i+1}}i\end{math} denotes the major index of a permutation and \begin{math}[n]_q = \frac{1-q^n}{1-q}\end{math}.
\end{theorem}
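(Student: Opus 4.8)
The plan is to prove Theorem~\ref{garsiaqanalogue} via the RSK correspondence, which also yields Theorem~\ref{garsia} upon setting $q=1$. Write $w\mapsto(P(w),Q(w))$ for the pair of standard Young tableaux of common shape $\sh(w)\vdash n$ attached to $w$ by RSK, and recall the standard facts that $\is(w)=\sh(w)_1$, that RSK is symmetric so $Q(w^{-1})=P(w)$, and that the descent set of a permutation equals the descent set of its recording tableau. Applying the last fact to $w^{-1}$ gives $\operatorname{Des}(w^{-1})=\operatorname{Des}(Q(w^{-1}))=\operatorname{Des}(P(w))$, so that $\maj(w^{-1})=\maj(P(w))$, where $\maj$ of a standard tableau $T$ means the sum of the $i$ such that $i+1$ lies in a strictly lower row than $i$ in $T$. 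One more elementary fact: $w_1<\cdots<w_m$ holds exactly when the entries $1,2,\dots,m$ occupy the first $m$ cells of the first row of $Q(w)$.

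The core of the argument is to evaluate $\sum q^{\maj(w^{-1})}$ over the $w\in S_n$ with $w_1<\cdots<w_m$ in two ways. Elementarily: passing to $\sigma=w^{-1}$ turns the sum into $\sum_\sigma q^{\maj(\sigma)}$ over the $\sigma\in S_n$ in which the values $1,\dots,m$ occur in increasing order of position, and inserting the value $n$ into such a $\sigma'\in S_{n-1}$ in any of its $n$ slots multiplies the generating function by $[n]_q$ (the classical maximum-insertion identity for $\maj$) while preserving the condition; induction on $n$ from the base case $m=n$ then gives the value $[n]_q[n-1]_q\cdots[m+1]_q=[n]_q!/[m]_q!$. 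Via RSK: grouping the same permutations by their recording tableau and using $\maj(w^{-1})=\maj(P(w))$ gives
\[
\frac{[n]_q!}{[m]_q!} \;=\; \sum_{\substack{\lambda\vdash n\\ \lambda_1\ge m}} f^{\lambda/(m)}\,f^{\lambda}(q),\qquad f^{\lambda}(q):=\sum_{T\in\mathrm{SYT}(\lambda)}q^{\maj(T)},
\]
since, $(m)$ denoting the one-row partition, a recording tableau of shape $\lambda$ with $1,\dots,m$ in its first row is exactly the data of a standard skew tableau of shape $\lambda/(m)$ (all of $1,\dots,m$ being smaller than every later entry), while for fixed shape $\lambda$ the insertion tableau $P(w)$ ranges freely over $\mathrm{SYT}(\lambda)$.

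Now specialize $m=n-r$ with $0\le r\le k$ and write each relevant $\lambda$ as $(n-|\mu|,\mu_1,\mu_2,\dots)$ for a partition $\mu$ with $|\mu|\le r$. This is the one point where $n\ge 2k$ is used: since $\mu_1\le|\mu|\le r\le k\le n-k\le n-r$, the skew diagram $\lambda/(n-r)$ is the disjoint union of a single row of $r-|\mu|$ cells and the diagram of $\mu$, whence $f^{\lambda/(n-r)}=\binom{r}{|\mu|}f^{\mu}$. Grouping by $j=|\mu|$ turns the displayed identity into
\[
\frac{[n]_q!}{[n-r]_q!} \;=\; \sum_{j=0}^{r}\binom{r}{j}\,H_j(q),\qquad H_j(q):=\sum_{\mu\vdash j}f^{\mu}\,f^{(n-j)+\mu}(q),
\]
and binomial inversion gives $H_k(q)=\sum_{r=0}^{k}(-1)^{k-r}\binom{k}{r}[n]_q[n-1]_q\cdots[n-r+1]_q$. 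Finally, $w\in\Pi_{n,k}$ iff $Q(w)$ has $1,\dots,n-k$ in its first row and $\sh(w)_1=n-k$; equivalently $\sh(w)=(n-k)+\mu$ for some $\mu\vdash k$ (and $\mu_1\le n-k$ automatically, again by $n\ge 2k$), with the first row of $Q(w)$ equal to $1,\dots,n-k$ — so $Q(w)$ below that row is an arbitrary element of $\mathrm{SYT}(\mu)$ — and $P(w)$ an arbitrary element of $\mathrm{SYT}((n-k)+\mu)$. Summing $q^{\maj(P(w))}=q^{\maj(w^{-1})}$ over all such $w$ yields $\sum_{\mu\vdash k}f^{\mu}f^{(n-k)+\mu}(q)=H_k(q)$, which is the left-hand side of the theorem.

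I expect the main obstacle to be the double evaluation in the second paragraph: making the elementary product formula and the RSK/skew-tableau expansion agree, and pinning down exactly where the hypothesis $n\ge 2k$ is needed (to disconnect the skew diagram $\lambda/(n-r)$, and to keep $(n-k)+\mu$ a genuine partition). The individual ingredients are routine, but arranging them so that one application of binomial inversion produces the alternating sum is the delicate point. A second, RSK-free proof should instead exhibit a sign-reversing involution directly on pairs consisting of a subset of $\{1,\dots,k\}$ and a permutation with a prescribed increasing prefix.
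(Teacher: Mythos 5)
Your argument is correct, and it reaches the theorem by a genuinely different route from the paper. You prove the identity $[n]_q!/[n-r]_q!=\sum_{j=0}^{r}\binom{r}{j}H_j(q)$ (which is exactly the paper's Proposition~2, the $q$-recurrence) and then apply binomial inversion; the paper explicitly notes this possibility in Section~3 ("the recurrence can be inverted\dots but the bijection itself does not succumb to direct inversion") and deliberately avoids it, instead constructing a direct nested set-difference identity $\R_{n,k}=\C_{n,k}\setminus(D_{n,k,1}\setminus(D_{n,k,2}\setminus\cdots))$ via auxiliary sets $D_{n,k,s}$ of non-standard tableau pairs, together with explicit $P$-preserving bijections $D_{n,k,s}\leftrightarrow\C_{n,k-s}\times\binom{[n-k+1,\ldots,n]}{s}$, so that the alternating sum is realized set-theoretically rather than produced by inversion. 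Your proof of the recurrence is also organized differently: where the paper builds a bijection on tableau pairs ($\C_{n,k,s}\leftrightarrow\R_{n,k-s}\times\binom{[n-k+1,\ldots,n]}{s}$), you count standard fillings of the disconnected skew shape $\lambda/(n-r)$ to get $f^{\lambda/(n-r)}=\binom{r}{|\mu|}f^{\mu}$ under $n\ge 2k$ --- the same combinatorial fact in a more shape-theoretic packaging --- and you establish the product formula $[n]_q\cdots[n-r+1]_q$ by the maximum-insertion lemma for $\maj$ rather than by the paper's $P$-partition argument (Lemma~\ref{q-inv}). What the paper's route buys is a fully bijective, sign-reversing realization of each term in the alternating sum (the stated goal of the paper); what your route buys is brevity and the avoidance of the somewhat delicate sets $E_{n,k,l}$ of non-standard tableaux, at the cost of an algebraic inversion step. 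All the individual steps you flag as delicate (the disconnection of $\lambda/(n-r)$, the legitimacy of $(n-k)+\mu$ as a partition, the identification of $H_k(q)$ with the left-hand side) do go through exactly as you describe, so no gap remains.
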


In this paper we will exhibit several bijections which will prove the above theorems. We will first define certain sets of permutations and pairs of tableaux which come in these bijections. We will then construct a relatively simple bijection showing a recurrence relation for the numbers $\#\Pi_{n,k}$. Using ideas from this bijection we will then construct a bijection proving Theorems \ref{garsia} and \ref{garsiaqanalogue} directly. We will also show a bijective proof which uses only permutations.

\section{A few simpler sets and definitions.}

Let $\rsk$ denote the RSK correspondence between permutations and pairs of tableaux  \cite{EC2}, i.e.\ $\rsk(w) =(P,Q)$, where $w\in S_n$ and $P$ and $Q$ are standard Young tableaux (SYT) on $[n]$ and of the same shape, with $P$ the insertion tableau and $Q$ the recording tableau of $w$.

Let \begin{math}C_{n,s}=\{w \in S_n| w_1 <w_2 < \dots <w_{n-s}\}\end{math} be the set of all permutations on $[n]$ with their first $n-s$ entries forming an increasing sequence. A permutation in $C_{n,s}$ is bijectively determined by the choice of the first $n-s$ elements from $[n]$ in $\binom{n}{s}$ ways and the arrangement of the remaining $s$ in $s!$ ways, so
\begin{align*}
 \#C_{n,s} = \binom{n}{s}s! = \frac{n!}{(n-s)!}.
\end{align*}
 
Let $\C_{n,s} = \rsk(C_{n,s})$. Its elements are precisely the pairs of same-shape SYTs $(P,Q)$ such that the first row of $Q$ starts with $1,2,\ldots,n-s$: the first $n-s$ elements are increasing and so will be inserted in this order in the first row, thereby recording their positions $1,2,\ldots,n-s$ in $Q$ in the first row also.

Let also $\R_{n,s}=\rsk(\Pi_{n,s})$. Its elements are pairs of SYTs $(P,Q)$, such that, as with $\C_{n,s}$, the first row of $Q$ starts with $1,2,\ldots,n-s$. By a theorem of Schensted, the length of the first row in $P$ and $Q$ is the length of the longest increasing subsequence of $w$, which is $n-s$ in the case of $\Pi_{n,s}$, so the first row of $Q$ is exactly $1,2,\ldots,n-s$. That is, $\R_{n,s}$ is the set of pairs of same-shape SYTs $(P,Q)$, such that the first row of $Q$ has length $n-s$ and elements $1,2,\ldots,n-s$.  

Finally, let $D_{n,k,s}$ be the set of pairs of same-shape tableaux $(P,Q)$, where $P$ is an SYT on $[n]$ and $Q$ is a tableau filled with $[n]$, with first row $1,2,\ldots,n-k,a_1,\ldots,a_s,b_1,\ldots$ where $a_1>a_2>\cdots>a_s$, $b_1<b_2<\cdots$ and the remaining elements of $Q$ are increasing in rows and down columns. Thus $Q$ without its first row is an SYT. Notice that when $s=0$ we just have $D_{n,k,0}= \C_{n,k}$.

The three sets of pairs we defined are determined by their $Q$ tableaux as shown below.

\begin{center}
\begin{tabular*}{\textwidth}{@{\extracolsep{\fill}}lll}
 \begin{tabular}{|c|c|c|c|}
\hline
$1$ & $\cdots$ & $n-s$ & $\cdots$ \\
\hline
 & $\cdots$ & & \multicolumn{1}{c}{} \\
\cline{1-3} 
$\vdots$ & & \multicolumn{2}{c}{}\\
\cline{1-2}
\end{tabular} 
&
\begin{tabular}{|c|c|c|}
\hline
$1$ & $\cdots$ & $n-s$ \\
\hline
 & $\cdots$ &  \\
\cline{1-3} 
$\vdots$ & & \multicolumn{1}{c}{}\\
\cline{1-2}
\end{tabular}
 &
 \begin{tabular}{|c|c|c|c|c|c|c|}
\hline
$1$ & $\cdots$ & $n-k$ & $a_1$ & $\cdots$ & $a_s$ & $\cdots$ \\
\hline
 & $\cdots$ & & \multicolumn{4}{c}{} \\
\cline{1-3} 
$\vdots$ & & \multicolumn{5}{c}{}\\
\cline{1-2}
\end{tabular} \\[0.4in]
$Q, \text{ for }(P,Q) \in \C_{n,s}$ & $Q,\text{ for }(P,Q)\in \R_{n,s}$ & $Q,\text{ for } (P,Q) \in D_{n,k,s}$ \\
\end{tabular*}
\end{center}

\section{A bijection.}

We will exhibit a simple bijection, which will give us a recurrence relation for the numbers $\# \Pi_{n,k}$ equivalent to Theorem \ref{garsia}.

 We should remark that while the recurrence can be inverted to give the inclusion-exclusion form of Theorem \ref{garsia}, the bijection itself does not succumb to direct inversion. However, the ideas of this bijection will lead us to discover the necessary construction for Theorem \ref{garsia}.

\begin{prop} \label{recurrence}
 The number of permutations in $\Pi_{n,k}$ satisfies the following recurrence:
\begin{align*}
\sum_{s=0}^{k} \binom{k}{s}\#\Pi_{n,s} = \binom{n}{k} k!
\end{align*}
\end{prop}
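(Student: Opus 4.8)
The plan is to pass everything through the RSK correspondence. Since $\rsk$ is a bijection we have $\#\Pi_{n,s}=\#\R_{n,s}$ for each $s$, and $\binom nk k!=\#C_{n,k}=\#\C_{n,k}$; also $\binom ks=\binom{k}{k-s}$ is the number of $(k-s)$-element subsets of $\{n-k+1,\ldots,n\}$. Hence it suffices to construct a bijection
\[
\Phi\colon\ \C_{n,k}\ \longrightarrow\ \bigsqcup_{s=0}^{k}\Bigl(\{\,S\subseteq\{n-k+1,\ldots,n\}: |S|=k-s\,\}\times\R_{n,s}\Bigr),
\]
as counting the two sides then gives exactly $\#\C_{n,k}=\sum_{s=0}^{k}\binom ks\#\R_{n,s}$, the asserted recurrence.

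To define $\Phi$, take $(P,Q)\in\C_{n,k}$. By the description of $\C_{n,k}$ recalled above, the first row of $Q$ reads $1,2,\ldots,n-k,x_1,\ldots,x_t$ with $x_1<\cdots<x_t$ and every $x_i>n-k$, so $X:=\{x_1,\ldots,x_t\}$ is a $t$-subset of $\{n-k+1,\ldots,n\}$. Set $s:=k-t$, so that $n-s$ is the length of the first row of $Q$; since this length is at least $n-k$ we have $0\le s\le k$. The first row of $Q$ uses the values $\{1,\ldots,n-k\}\cup X$, so the entries of $Q$ lying strictly below the first row, together with their positions, form a Young tableau (increasing along rows and down columns) on the $s$-element set $M:=\{n-k+1,\ldots,n\}\setminus X$. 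Let $Q'$ be obtained from $Q$ by replacing its first row with $1,2,\ldots,n-s$ and relabelling this lower subtableau order-isomorphically onto $\{n-s+1,\ldots,n\}$, and set $\Phi(P,Q)=\bigl(X,(P,Q')\bigr)$.

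It remains to verify that $\Phi$ lands where claimed and to produce its inverse. The tableau $Q'$ has the same shape as $Q$, first row $1,\ldots,n-s$, and a standard lower part by construction; the only comparison requiring attention is column-strictness across the first two rows. The second row of $Q'$ has at most $s$ boxes, and — this is where the hypothesis $n\ge 2k$ enters, forcing $s\le k\le n-k$ — every first-row entry it must exceed lies in $\{1,\ldots,n-k\}$, hence is strictly below the relabelled values, which are all $\ge n-s+1$. Thus $(P,Q')$ is a pair of same-shape standard Young tableaux on $[n]$ whose recording tableau has first row exactly $1,\ldots,n-s$, i.e.\ $(P,Q')\in\R_{n,s}$. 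Conversely, from a pair $\bigl(X,(P',Q')\bigr)$ with $|X|=k-s=:t$ we set $P=P'$ and build $Q$ with first row $1,\ldots,n-k$ followed by the increasing listing of $X$, and with the lower part of $Q'$ (a tableau on $\{n-s+1,\ldots,n\}$) relabelled order-isomorphically onto $\{n-k+1,\ldots,n\}\setminus X$; the identical column-strictness check (again using $n\ge 2k$) shows $(P,Q)$ is a legitimate pair whose $Q$-tableau has first row beginning $1,\ldots,n-k$, so $(P,Q)\in\C_{n,k}$. Since $M$ is recovered from $X$, the two relabellings are inverse to one another, and the two assignments are manifestly mutually inverse.

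The only genuine obstacle in this plan is the bookkeeping that $Q'$ (and, in the reverse direction, $Q$) really is a standard Young tableau; as indicated, this reduces to a single column-strict inequality between rows $1$ and $2$, and that is precisely the step where $n\ge 2k$ is used. Everything else — the translation through $\rsk$, the identification of $X$, and the count of subsets on the right-hand side — is immediate.
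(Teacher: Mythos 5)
Your proof is correct and follows essentially the same route as the paper: both pass through RSK, stratify $\C_{n,k}$ by the length of the first row of $Q$, and relabel the entries below the first row order-isomorphically so as to land in $\R_{n,s}\times\binom{[n-k+1,\ldots,n]}{k-s}$, with the hypothesis $n\ge 2k$ invoked exactly where you invoke it (the column-strictness check between the first two rows, which is really only needed when reconstructing $Q$ from $(X,(P',Q'))$). There is no gap.
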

\begin{proof}

Let $\C_{n,k,s}$ with $s\leq k$ be the set of pairs of same-shape tableaux $(P,Q)$, such that the length of their first rows is $n-k+s$ and the first row of $Q$ starts with $1,2,\dots,n-k$; clearly $\C_{n,k,s} \subset \C_{n,k}$. We have that 
\begin{align}\label{cnk}
\bigcup_{s=0}^{k}\C_{n,k,s} = \C_{n,k},
\end{align}
 as $\C_{n,k}$ consists of the pairs $(P,Q)$ with $Q$'s first row starting with $1,\ldots,n-k$ and if $n-k+s$ is this first row's length then $(P,Q) \in \C_{n,k,s}$.
 
 There is a bijection \begin{math}\C_{n,k,s} \leftrightarrow \R_{n,k-s} \times \binom{[n-k+1,\ldots,n]}{s}\end{math} given as follows. If \begin{math}(P,Q) \in \C_{n,k,s}\end{math} and the first row of $Q$ is $1,2,\ldots,n-k,b_1,\ldots,b_s$, let \begin{equation*}f:[n-k+1,\ldots,n] \setminus \{b_1,\ldots,b_s\} \rightarrow \break [n-k+s+1,\ldots,n]\end{equation*} be the order-preserving map. Let then $Q'$ be the tableau obtained from $Q$ by replacing every entry $b$ not in the first row with $f(b)$ and the first row with $1,2,\ldots,n-k,n-k+1,\ldots,n-k+s$. Then $Q'$ is an SYT, since $f$ is order-preserving and so the rows and columns are still increasing, first row included as its elements are smaller than any element below it. Then the bijection in question is \begin{math}(P,Q) \leftrightarrow (P,Q',b_1,\ldots,b_s)\end{math}. Conversely, if \begin{math}b_1,\ldots,b_s \in [n-k+1,\ldots,n]\end{math} (in increasing order) and $(P,Q') \in \R_{n,k-s}$, then replace all entries $b$ below the first row of $Q'$ with $f^{-1}(b)$ and the the first row of $Q'$ with $1,2,\ldots,n-k,b_1,\ldots,b_s$. We end up with a tableau $Q$, which is an SYT because: the entries below the first row preserve their order under $f$; and, since they are at most $k\leq n-k$, they are all below the first $n-k$ entries of the first row of $Q$ (which are $1,2,\ldots,n-k$, and thus smaller). 
 
 So we have that \begin{math}\#\C_{n,k,s} = \binom{k}{s} \#\R_{n,k-s}\end{math} and substituting this into \eqref{cnk} gives us the statement of the lemma.
\end{proof}

\section{Proofs of the theorems.}
We will prove Theorems \ref{garsia} and \ref{garsiaqanalogue} by exhibiting an inclusion-exclusion relation between the sets $\R_{n,k}$ and $D_{n,k,s}$ for $s=0,1,\ldots,k$.

\begin{proof}[of Theorem \ref{garsia}]

First of all, if $n \geq 2k$ we have a bijection \begin{math}D_{n,k,s} \leftrightarrow \C_{n,k-s}\times \binom{[n-k+1,\ldots,n]}{s}\end{math}, where the correspondence is \begin{math}(P,Q) \leftrightarrow (P,Q') \times \{a_1,\ldots,a_s\}\end{math} given as follows. 

Consider the order-preserving bijection \begin{equation*}f:[n-k+1,\ldots,n]\setminus\{a_1,\ldots,a_s\} \rightarrow \hfil \break [n-k+s+1,\ldots,n].\end{equation*} 
Then $Q'$ is obtained from $Q$ by replacing $a_1,\ldots,a_s$ in the first row with $n-k+1,\ldots,n-k+s$ and every other element $b$ in $Q$, $b>n-k$ and $\neq a_i$, with $f(b)$. The first $n-k$ elements in the first row remain $1,2,\ldots,n-k$. Since $f$ is order-preserving, $Q'$ without its first row remains an SYT (the inequalities within rows and columns are preserved). Since also $n-k \geq k$, we have that the second row of $Q$ (and $Q'$) has length less than or equal to $k$ and hence $n-k$, so since the elements above the second row are among $1,2,\ldots,n-k$ they are smaller than any element in the second row (which are all from $[n-k+1,\ldots,n]$).  Also, the remaining first row of $Q'$ is increasing since it starts with $1,2,\ldots,n-k,n-k+1,\ldots,n-k+s$ and its remaining elements are in $[n-k+s+1,\ldots,n]$ and are increasing because $f$ is order-preserving. Hence $Q'$ is an SYT with first row starting with $1,\ldots,n-k+s$, so \begin{math}(P,Q') \in \C_{n,k-s}\end{math}. 

Conversely, if $(P,Q') \in \C_{n,k-s}$  and $\{a_1,\ldots,a_s\} \in [n-k+1,\ldots,n]$ with $a_1>a_2\cdots>a_s$, then we obtain $Q$ from $Q'$ by replacing $n-k+1,\ldots,n=k+s$ with $a_1,\ldots,a_s$ and the remaining elements $b>n-k$ with $f^{-1}(b)$, again preserving their order, and so $(P,Q) \in D_{n,k,s}$.

Hence, in particular, 
\begin{align} \label{dnks}
\#D_{n,k,s} = \binom{k}{s} \#\C_{n,k-s}=\binom{k}{s} \#C_{n,k-s}= \binom{k}{s} \frac{n!}{(n-k+s)!}.
\end{align}

We have that \begin{math} \R_{n,k} \subset \C_{n,k}\end{math} since \begin{math}\Pi_{n,k} \subset C_{n,k}\end{math}. Then \begin{math}\C_{n,k} \setminus \R_{n,k}\end{math} is the set of pairs of SYTs $(P,Q)$ for which the first row of $Q$ is $1,2,\dots,n-k,a_1,\dots$ for at least one $a_1$. So \begin{math}E_{n,k,1}=\C_{n,k} \setminus \R_{n,k}\end{math} is then a subset of $D_{n,k,1}$. The remaining elements in $D_{n,k,1}$, that is \begin{math}E_{n,k,2}=D_{n,k,1} \setminus E_{n,k,1}\end{math}, would be exactly the ones for which $Q$ is not an SYT, which can happen only when the first row of $Q$ is $1,2,\dots,n-k,a_1>a_2,\dots$. These are now a subset of $D_{n,k,2}$ and by the same argument, we haven't included the pairs for which the first row of $Q$ is $1,2,\dots,n-k,a_1>a_2>a_3,\dots$, which are now in $D_{n,k,3}$. Continuing in this way, if \begin{math}E_{n,k,l+1}=D_{n,k,l}\setminus E_{n,k,l}\end{math}, we have that $E_{n,k,l}$ is the set of \begin{math}(P,Q) \in D_{n,k,l}\end{math}, such that the first row of $Q$ is $1,2,,\ldots,n-k,a_1>\cdots>a_l<\cdots$. Then $E_{n,k,l+1}$ is the subset of $D_{n,k,l}$, for which the element after $a_l$ is smaller than $a_l$ and so $E_{n,k,l+1} \subset D_{n,k,l+1}$. Finally, $E_{n,k,k}=D_{n,k,k}$ and $E_{n,k,k+1} = \varnothing$. We then have
\begin{align}\label{setie}
\R_{n,k} = \C_{n,k}\setminus\left( D_{n,k,1}\setminus\left(D_{n,k,2} \setminus \dots \setminus(D_{n,k,k-1} \setminus D_{n,k,k})\right)\right), 
\end{align}
 or in terms of number of elements, applying \eqref{dnks}, we get
\begin{align*}
\#\Pi_{n,k} 
  &=\#\R_{n,k}
   = \frac{n!}{(n-k)!} - \binom{k}{1} \frac{n!}{(n-k+1)!} + \binom{k}{2} \frac{n!}{(n-k+2)!} +\dots \\
  &= \sum_{i=0}^{k}(-1)^i \binom{k}{i} \frac{n!}{(n-k+i)!},
\end{align*}
which is what we needed to prove. 
\end{proof}

Theorem \ref{garsiaqanalogue} will follow directly from \eqref{setie} after we prove the following lemma.
\begin{lemma}\label{q-inv}
We have that
\begin{align*}
\sum_{w \in C_{n,s}} q^{\maj(w^{-1})} = [n]_q\ldots[n-s+1]_q,
\end{align*}
where \begin{math}\maj(\sigma) = \sum_{i: \sigma_i>\sigma_{i+1}}i\end{math} denotes the major index of $\sigma$.
\end{lemma}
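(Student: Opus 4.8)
The plan is to carry the statistic $\maj(w^{-1})$ over to the inverse permutation and then run an induction on $s$ by inserting the largest letter. \emph{Reformulation.} For $\sigma\in S_m$ write $\sigma^{-1}(v)$ for the position of the value $v$ in the word $\sigma_1\cdots\sigma_m$; let $U_{m,t}$ (for $0\le t\le m$) be the set of $\sigma\in S_m$ in which the values $1,2,\dots,m-t$ occur in increasing positional order, i.e.\ $\sigma^{-1}(1)<\sigma^{-1}(2)<\cdots<\sigma^{-1}(m-t)$. Given $w\in S_n$ with $\sigma=w^{-1}$, we have $w_i=\sigma^{-1}(i)$, so the inequalities $w_1<\cdots<w_{n-s}$ defining $C_{n,s}$ are exactly $\sigma^{-1}(1)<\cdots<\sigma^{-1}(n-s)$; hence $w\mapsto w^{-1}$ is a bijection $C_{n,s}\to U_{n,s}$ and
\begin{align*}
\sum_{w\in C_{n,s}}q^{\maj(w^{-1})}=\sum_{\sigma\in U_{n,s}}q^{\maj(\sigma)}.
\end{align*}
It suffices to prove $\sum_{\sigma\in U_{n,s}}q^{\maj(\sigma)}=[n]_q[n-1]_q\cdots[n-s+1]_q$.

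\emph{Induction on $s$.} For $s=0$, $U_{n,0}=\{12\cdots n\}$, the sum is $1$, and the claimed product is empty. For $s\ge1$, since the letter $n$ is never among $1,\dots,n-s$, erasing it from any $\sigma\in U_{n,s}$ leaves the order of $1,\dots,n-s=(n-1)-(s-1)$ intact and gives some $\sigma'\in U_{n-1,s-1}$; conversely each $\sigma'\in U_{n-1,s-1}$ is obtained this way from exactly the $n$ words got by inserting the letter $n$ into one of its $n$ slots, and every such word lies in $U_{n,s}$ (inserting the maximal letter cannot disturb the relative order of $1,\dots,n-s$). Grouping the left-hand sum by $\sigma'$ gives
\begin{align*}
\sum_{\sigma\in U_{n,s}}q^{\maj(\sigma)}=\sum_{\sigma'\in U_{n-1,s-1}}\;\sum_{\sigma}q^{\maj(\sigma)},
\end{align*}
the inner sum ranging over the $n$ insertions of $n$ into $\sigma'$.

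\emph{Insertion and conclusion.} I would then invoke the classical fact that for a fixed $\tau\in S_{m-1}$ the $m$ permutations of $S_m$ obtained by inserting the new largest letter $m$ have major indices $\maj(\tau),\maj(\tau)+1,\dots,\maj(\tau)+(m-1)$ in some order, so their $\maj$-generating function equals $[m]_q\,q^{\maj(\tau)}$ (this is exactly the step underlying $\sum_{\sigma\in S_m}q^{\maj(\sigma)}=[m]_q!$; see \cite{EC2}). With $m=n$ the inner sum above equals $[n]_q\,q^{\maj(\sigma')}$, so
\begin{align*}
\sum_{\sigma\in U_{n,s}}q^{\maj(\sigma)}=[n]_q\sum_{\sigma'\in U_{n-1,s-1}}q^{\maj(\sigma')}=[n]_q[n-1]_q\cdots[n-s+1]_q
\end{align*}
by the inductive hypothesis, which is the lemma.

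\emph{Expected obstacle.} Apart from the insertion identity of the last step, everything is bookkeeping about relative orders of letters. The one quantitative input — that inserting the maximum multiplies the $\maj$-generating function by $[m]_q$ when passing from $S_{m-1}$ to $S_m$ — is the crux; I would either cite it or reproduce its short classical proof, taking care that the deletion and insertion used here stay within the sets $U_{m,t}$, which holds precisely because $n>n-s$.
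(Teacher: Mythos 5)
Your proof is correct, but it takes a genuinely different route from the paper. The paper observes that $w\in C_{n,s}$ iff $w^{-1}$ is a linear extension of the poset consisting of a chain $1,\dots,n-s$ together with $s$ isolated points, and then invokes Stanley's $P$-partition theorem (Theorem 4.5.8 of \cite{EC1}) to write $\sum_{w\in\mathcal{L}(P)}q^{\maj(w)}$ as $G_P(q)(1-q)\cdots(1-q^n)$, computing $G_P(q)$ as the product of Euler's generating function for partitions with at most $n-s$ parts and $1/(1-q)^s$ for the isolated points. You instead transfer the statistic to the inverse permutation (your set $U_{n,s}$ of permutations containing $1,2,\dots,n-s$ as a subsequence in that order, which is the correct translation of the condition $w_1<\cdots<w_{n-s}$), and then induct on $s$ by deleting and reinserting the maximal letter $n$, using the classical fact that the $m$ insertions of $m$ into $\tau\in S_{m-1}$ realize the major indices $\maj(\tau),\maj(\tau)+1,\dots,\maj(\tau)+m-1$. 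Both arguments are sound. Yours is more self-contained and elementary, resting only on the one insertion lemma (which underlies MacMahon's $\sum_{\sigma\in S_m}q^{\maj(\sigma)}=[m]_q!$ and whose short proof you should include or cite precisely --- it appears in \cite{EC1} rather than \cite{EC2}); the paper's is a one-line application of heavier but standard $P$-partition machinery, which has the side benefit of making the factorization $[n]_q\cdots[n-s+1]_q=\frac{(1-q)\cdots(1-q^n)}{(1-q)\cdots(1-q^{n-s})(1-q)^s}$ conceptually transparent. Your closing observation --- that the deletion/insertion stays inside the sets $U_{m,t}$ precisely because the letter being moved is $n>n-s$ --- is exactly the point that makes the induction go through, so the argument has no gaps.
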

\begin{proof}

Let $P$ be the poset on $[n]$ consisting of a chain $1,\ldots,n-s$ and the single points $n-s+1,\ldots,n$. Then $\sigma \in C_{n,s}$ if and only if \begin{math}\sigma^{-1} \in \mathcal{L}(P)\end{math}, \textit{i.e.}\ $\sigma:P \rightarrow [n]$ is a linear extension of $P$. 
We have that 
\begin{align*}
\sum_{w \in C_{n,s}} q^{\maj(w^{-1})} = \sum_{w \in \mathcal{L}(P)} q^{\maj(w)};
\end{align*}
denote this expression by $W_P(q)$. By theorem 4.5.8 from \cite{EC1} on $P$-partitions, we have that 
\begin{align}\label{wp}
W_P(q)=G_P(q)(1-q)\ldots(1-q^n),
\end{align}
where \begin{math}G_P(q) = \sum_{m \geq 0} a(m)q^m\end{math} with $a(m)$ denoting the number of $P$-partitions of $m$. That is, $a(m)$ is the number of order-reversing maps \begin{math}\tau:P \rightarrow \mathbb{N}\end{math}, such that $\sum_{i \in P}\tau(i) =m$. In our particular case, these correspond to sequences $\tau(1),\tau(2),\ldots$, whose sum is $m$ and whose first $n-s$ elements are non-increasing. These correspond to partitions of at most $n-s$ parts and a sequence of $s$ nonnegative integers, which add up to $m$.  The partitions with at most $n-s$ parts are in bijection with the partitions with largest part $n-s$ (by transposing their Ferrers diagrams). The generating function for the latter is given by a well-known formula of Euler and is  equal to 
\begin{align*}
\frac{1}{(1-q)(1-q^2)\cdots(1-q^{n-s})}.
\end{align*}
 The generating function for the number of sequences of $s$ nonnegative integers with a given sum is trivially $1/(1-q)^s$ and so we have that
\begin{align*}
G_P(q) = \frac{1}{(1-q)(1-q^2)\cdots(1-q^{n-s})} \frac{1}{(1-q)^s}.
\end{align*} 
After substitution in \eqref{wp} we obtain the statement of the lemma.
\end{proof}

\begin{proof}[of Theorem \ref{garsiaqanalogue}.]

The descent set of a tableau $T$ is the set of all $i$, such that $i+1$ is in a lower row than $i$ in $T$, denote it by $D(T)$. 
By the properties of RSK (see e.g. \cite{EC2}, lemma 7.23.1) we have that the descent set of a permutation, \begin{math}D(w) = \{i: w_i>w_{i+1}\}\end{math} is the same as the descent set of its recording tableau, or by the symmetry of RSK, $D(w^{-1})$ is the same as the descent set of the insertion tableau $P$. Write $\maj(T)=\sum_{i\in D(T)} i$.
Hence we have that
\begin{align} \label{permtabmaj}
 \sum_{w \in \Pi_{n,k}} q^{\maj(w^{-1})} = \sum_{w \in \Pi_{n,k},\rsk(w)=(P,Q)} q^{ \maj(P)} =\sum_{(P,Q)\in \R_{n,k}} q^{\maj(P)} 
\end{align}

From the proof of Theorem \ref{garsia} we have the equality \eqref{setie} on sets  of pairs $(P,Q)$,
\begin{align*}
\R_{n,k} = \C_{n,k}\setminus\left( D_{n,k,1}\setminus\left(D_{n,k,2} \setminus \dots \setminus(D_{n,k,k-1} \setminus D_{n,k,k})\right)\right),
\end{align*}
or alternatively, \begin{math}\R_{n,k} = \C_{n,k} \setminus E_{n,k,1}\end{math} and \begin{math}E_{n,k,l}=D_{n,k,l} \setminus E_{n,k,l+1}\end{math}.
 Hence the statistic $q^{\maj(P)}$ on these sets will also respect the equalities between them; \textit{i.e.}\ we have 

\begin{align}\label{pinkq}
\sum_{(P,Q) \in \R_{n,k}}q^{\maj(P)}
 &= \sum_{(P,Q)\in  \C_{n,k}\setminus E_{n,k,1}}q^{\maj(P)} \notag \\
 &= \sum_{(P,Q) \in \C_{n,k}}q^{\maj(P)} - \sum_{(P,Q) \in E_{n,k,1}} q^{\maj(P)} \notag \\
 &=  \sum_{(P,Q) \in \C_{n,k}}q^{\maj(P)} - \sum_{(P,Q) \in D_{n,k,1}} q^{\maj(P)} + \sum_{(P,Q) \in E_{n,k,2}} q^{\maj(P)} = \cdots \notag \\
 &= \sum_{(P,Q) \in \C_{n,k}}q^{\maj(P)} - \sum_{(P,Q) \in D_{n,k,1}} q^{\maj(P)} + \cdots +(-1)^k\sum_{(P,Q) \in D_{n,k,k}} q^{\maj(P)}. 
\end{align}
Again, by the RSK correspondence, $\maj(P)=\maj(w^{-1})$ and Lemma \ref{q-inv} we have that 
\begin{align}\label{cnkq}
\sum_{(P,Q) \in \C_{n,k}}q^{\maj(P)} = \sum_{w \in C_{n,k}}q^{\maj(w^{-1})}  = [n]_q\cdots[n-k+1]_q.
\end{align}
 
In order to evaluate \begin{math}\sum_{(P,Q) \in D_{n,k,s}}q^{\maj(P)}\end{math} we note that pairs $(P,Q) \in D_{n,k,s}$ are in correspondence with triples \begin{math}(P,Q',\mathbf{a}=\{a_1,\ldots,a_s\})\end{math}, where $P$ remains the same and $(P,Q') \in \C_{n,k-s}$. Hence
\begin{align}\label{dnksq}
\sum_{(P,Q) \in D_{n,k,s}}q^{\maj(P)}
&= \sum_{(P,Q',\mathbf{a})}q^{\maj(P)} \notag \\
&=\sum_{\mathbf{a} \in \binom{[n-k+1,\ldots,n]}{s}}\sum_{(P,Q')\in \C_{n,k-s}}q^{\maj(P)} \notag \\
&= \binom{k}{s} [n]_q\cdots [n-k+s+1]_q.
\end{align}
Substituting the equations for \eqref{cnkq} and \eqref{dnksq} into \eqref{pinkq} and comparing with \eqref{permtabmaj} we obtain the statement of the theorem. 
\end{proof}

We can apply the same argument for the preservation of the insertion tableaux and their descent sets to the bijection $T_{n,k,s} \leftrightarrow \R_{n,k-s} \times \binom{[n-k+1,\ldots,n]}{s}$ in Proposition \ref{recurrence}. We see that the insertion tableaux $P$ in this bijection, $(P,Q) \leftrightarrow (P,Q',b_1,\ldots,b_s)$ remains the same and so do the corresponding descent sets and major indices

\begin{align*}
\sum_{(P,Q) \in T_{n,k,s}} q^{\maj(P)} = \binom{k}{s} \sum_{(P,Q')\in \R_{n,k-s}} q^{\maj(P)}=\binom{k}{s}\sum_{w \in \Pi_{n,k-s}} q^{\maj(w^{-1})}.
\end{align*}

Hence we have the following corollary to the bijection in Proposition \ref{recurrence} and Lemma \ref{q-inv}.
\begin{prop} We have that
\begin{align}
\sum_{s=0}^{k} \binom{k}{s} \sum_{w\in \Pi_{n,k-s}} q^{\maj(w^{-1})} = [n]_q\cdots[n-k+1]_q.
\end{align}
\end{prop}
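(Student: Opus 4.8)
The plan is to re-run the bijection underlying Proposition \ref{recurrence} while tracking the insertion tableau rather than just cardinalities. Recall that in that proof we exhibited, for each $s\le k$, a bijection $\C_{n,k,s}\leftrightarrow\R_{n,k-s}\times\binom{[n-k+1,\ldots,n]}{s}$ sending $(P,Q)$ to $(P,Q',b_1,\ldots,b_s)$, where only the recording tableau is modified: $Q'$ is obtained from $Q$ by the order-preserving relabelling $f$ of the entries below the first row and by replacing the first row. In particular the insertion tableau $P$ is literally unchanged, so this bijection preserves the descent set $D(P)$ and hence the statistic $\maj(P)$.

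Concretely, I would first apply this bijection to get, for each $s$,
\begin{align*}
\sum_{(P,Q)\in\C_{n,k,s}} q^{\maj(P)} \;=\; \binom{k}{s}\sum_{(P,Q')\in\R_{n,k-s}} q^{\maj(P)},
\end{align*}
since the $s$-subset $\{b_1,\ldots,b_s\}$ ranges freely over $\binom{[n-k+1,\ldots,n]}{s}$ independently of $(P,Q')$. Next I would sum over $s=0,\ldots,k$. By \eqref{cnk} the sets $\C_{n,k,s}$ are a disjoint partition of $\C_{n,k}$ — the block containing $(P,Q)$ is singled out by the first-row length $n-k+s$ — so the left-hand sides sum to $\sum_{(P,Q)\in\C_{n,k}} q^{\maj(P)}$. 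Invoking the RSK identity $\maj(P)=\maj(w^{-1})$ for $\rsk(w)=(P,Q)$ (the lemma of \cite{EC2} already used for Theorem \ref{garsiaqanalogue}) together with Lemma \ref{q-inv}, this equals $\sum_{w\in C_{n,k}} q^{\maj(w^{-1})}=[n]_q\cdots[n-k+1]_q$. On the right-hand side the same RSK identity, applied to $\R_{n,k-s}=\rsk(\Pi_{n,k-s})$, rewrites $\sum_{(P,Q')\in\R_{n,k-s}} q^{\maj(P)}$ as $\sum_{w\in\Pi_{n,k-s}} q^{\maj(w^{-1})}$, which is exactly the claimed identity.

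I do not expect a real obstacle: the whole content is that the Proposition \ref{recurrence} bijection leaves $P$ alone (immediate from its construction) and that \eqref{cnk} is a disjoint union (immediate from the first-row-length invariant). The only point requiring care is consistency of notation for $\maj$, namely that $\maj(P):=\sum_{i\in D(P)}i$ agrees with $\maj(w^{-1})$, and this is precisely the RSK fact cited above. As a sanity check, specializing $q=1$ collapses the identity to $\sum_s\binom{k}{s}\#\Pi_{n,k-s}=[n]\cdots[n-k+1]|_{q=1}=n!/(n-k)!=\binom{n}{k}k!$, which is Proposition \ref{recurrence}.
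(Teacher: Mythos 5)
Your proposal is correct and follows essentially the same route as the paper: it reuses the $P$-preserving bijection $\C_{n,k,s}\leftrightarrow\R_{n,k-s}\times\binom{[n-k+1,\ldots,n]}{s}$ from Proposition \ref{recurrence}, sums over the disjoint decomposition \eqref{cnk}, and converts between $\maj(P)$ and $\maj(w^{-1})$ via RSK together with Lemma \ref{q-inv}. The paper presents this identically (its ``$T_{n,k,s}$'' is a typo for $\C_{n,k,s}$), so there is nothing further to add.
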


\section{Permutations only.}

Since the original question was posed only in terms of permutations, we will now give proofs of the main theorems without passing on to the pairs of tableaux. The constructions we will introduce is inspired from application of the inverse RSK to the pairs of tableaux considered in our proofs so far. However, since the pairs $(P,Q)$ of tableaux in the sets $D_{n,k,s}$ are not pairs of Standard Young Tableaux we cannot apply directly the inverse RSK to the bijection in the proof of Theorem \ref{garsia}. This requires us to find new constructions and sets of permutations.

We will say that an increasing subsequence of length $m$ of a permutation $\pi$ satisfies the LLI-$m$ (Least Lexicographic Indices) property if it is the first appearance of an increasing subsequence of length $m$ (\textit{i.e.} if $a$ is the index of its last element, then $\bar{\pi}=\pi_1,\ldots,\pi_{a-1}$ has $\is(\bar{\pi})<m$) and the indices of its elements are smallest lexicographically among all such increasing subsequences. For example, in $\pi=2513467$, $234$ is LLI-3. 
Let $n \geq 2s$ and let $C_{n,s,a}$ with $a\in[n-s+1,\ldots,n]$ be the set of permutations in $C_{n,s}$, for which there is an increasing subsequence of length $n-s+1$ and whose LLI-$(n-s+1)$ sequence has its last element at position $a$. 

We define a map $\Phi:C_{n,s}\setminus \R_{n,s} \rightarrow C_{n,s-1}\times [n-s+1,\ldots, n]$ for $n\geq 2s$ as follows. A permutation $\pi \in C_{n,s}\setminus \R_{n,s}$ has a LLI-$(n-s+1)$ subsequence $\sigma$ which would necessarily start with $\pi_1$ since $n\geq 2s$ and $\pi \in C_{n,s}$. Let $\sigma=\pi_1,\ldots,\pi_l, \pi_{i_{l+1}},\ldots,\pi_{i_{n-s+1}}$ for some $l\geq 0$; if $a=i_{n-s+1}$ then $\pi \in C_{n,s,a}$. Let $w$ be obtained from $\pi$ by setting $w_{i_j} = \pi_{i_{j+1}}$ for $l+1\leq j \leq n-s,$ and then inserting $\pi_{i_{l+1}}$ right after $\pi_l$, all other elements preserve their (relative) positions. For example, if $\pi = 12684357 \in C_{8,4} \setminus \R_{8,4}$, then $12457$ is LLI-5, $a=8$ and $w=12468537$. Set $\Phi(\pi)=(w,a)$.
\begin{lemma}
The map $\Phi$ is well-defined and injective. We have that 
\begin{align*}
C_{n,s-1}\times{a}\setminus \Phi(C_{n,s,a}) = \bigcup_{n-s+2\leq b\leq a} C_{n,s-1,b}.
\end{align*}
\end{lemma}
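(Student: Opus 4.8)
The plan is to verify the three assertions in the order they are stated: (i) $\Phi$ is well-defined, meaning $\Phi(\pi)=(w,a)$ actually lands in $C_{n,s-1}\times[n-s+1,\ldots,n]$; (ii) $\Phi$ is injective; and (iii) the image $\Phi(C_{n,s,a})$ inside $C_{n,s-1}\times\{a\}$ is exactly the complement of $\bigcup_{n-s+2\le b\le a} C_{n,s-1,b}$. Throughout I would keep track of the picture: $\pi\in C_{n,s}\setminus\R_{n,s}$ has increasing prefix $\pi_1<\cdots<\pi_{n-s}$ but $\is(\pi)\ge n-s+1$, so its LLI-$(n-s+1)$ subsequence $\sigma$ begins with the whole prefix-segment $\pi_1,\ldots,\pi_l$ (the maximal initial run it can use, with $l\le n-s$) and then picks up $n-s+1-l$ more entries $\pi_{i_{l+1}}<\cdots<\pi_{i_{n-s+1}}$ from the ``scrambled'' tail; here $a=i_{n-s+1}>n-s$ since the tail sits in positions $n-s+1,\ldots,n$.

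For (i): the operation forming $w$ cyclically shifts the \emph{values} $\pi_{i_{l+1}},\ldots,\pi_{i_{n-s+1}}$ backward along their own positions and reinserts $\pi_{i_{l+1}}$ (the smallest of them, since $\sigma$ is increasing) immediately after $\pi_l$. I must check first that $w$ is still a permutation of $[n]$ with an increasing prefix of length $n-(s-1)=n-s+1$: the new prefix is $\pi_1,\ldots,\pi_l,\pi_{i_{l+1}}$ followed by the original prefix entries $\pi_{l+1},\ldots,\pi_{n-s}$ --- wait, the reinsertion is ``right after $\pi_l$,'' so these must all exceed $\pi_{i_{l+1}}$; this is exactly the LLI-minimality of $\sigma$, because if some $\pi_j$ with $l<j\le n-s$ satisfied $\pi_j>\pi_{i_{l+1}}$ we would have a lexicographically earlier choice (index $j\le n-s < i_{l+1}$), and if $\pi_j<\pi_{i_{l+1}}$ then... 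I need to argue $\pi_{i_{l+1}}$ is smaller than all of $\pi_{l+1},\ldots,\pi_{n-s}$, which should follow from $l$ being the maximal usable prefix length (if $\pi_{i_{l+1}}>\pi_{l+1}$ then $\pi_1,\ldots,\pi_{l+1}$ could start the LLI sequence, contradicting the choice of $l$ as determined by LLI-minimality). So the new prefix is genuinely increasing and has length $n-s+1$, giving $w\in C_{n,s-1}$; and $a\in[n-s+1,\ldots,n]$ as noted.

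For (ii): I would reconstruct $\pi$ and $\sigma$ from $(w,a)$. Given $w\in C_{n,s-1,}$ and the index $a$, the LLI-$(n-s+1)$ machinery identifies $l$ (the length of the initial run of $w$ that must be kept, equivalently $n-s+1$ minus the number of tail positions used) and the tail positions $i_{l+1}<\cdots<i_{n-s+1}=a$; the value $\pi_{i_{l+1}}$ is recovered as $w_{l+1}$, the values $\pi_{i_{j}}=w_{i_{j-1}}$ for the shift, and position $i_{l+1}$ is freed up. The content check is that this inverse really produces an element of $C_{n,s}\setminus\R_{n,s}$ with the prescribed LLI data; this is where I expect the \textbf{main obstacle}, since I must show the reconstructed $\sigma$ is still LLI for $\pi$ --- i.e.\ that the backward shift did not create an even-earlier length-$(n-s+1)$ increasing subsequence, and did not destroy the fact that $\is(\bar\pi)<n-s+1$ for the truncation before position $a$. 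The constraint $n\ge 2s$ is what forces the prefix to be long enough that these subsequences can't ``wrap'' in unexpected ways.

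For (iii): an element $(w,a)\in C_{n,s-1}\times\{a\}$ is in $\Phi(C_{n,s,a})$ iff running the inverse construction yields a valid $\pi$, which by the analysis above happens iff $w$ does \emph{not} already have a length-$(n-s+1)$ increasing subsequence whose last index is some $b$ with $n-s+2\le b\le a$ that would obstruct the reconstruction --- precisely, $w\notin C_{n,s-1,b}$ for all such $b$. I would argue both inclusions: if $w=\Phi(\pi)$ then the LLI-$(n-s+2)$ sequence of $w$ (if any) has last index $>a$ or doesn't exist with last index in $[n-s+2,a]$, by comparing it against what $\pi$ looked like; conversely, if $w$ avoids all $C_{n,s-1,b}$ for $b\le a$, then peeling off the reinserted value and shifting forward lands inside $C_{n,s,a}$. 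I'd phrase this as: the inverse map extends over exactly the stated complement, closing the loop with the injectivity of part (ii). The routine bookkeeping of which entries move where I would relegate to the running example $\pi=12684357\mapsto(12468537,8)$ and one sentence of ``the general case is identical.''
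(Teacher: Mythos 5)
Your part (i) is essentially the paper's argument: the only thing to check is $\pi_{i_{l+1}}<\pi_{l+1}$, which follows from LLI-minimality exactly as you say, and then $w\in C_{n,s-1}$. But for (ii) and (iii) your proposal is a plan rather than a proof, and the step you yourself flag as ``the main obstacle'' is precisely where all the work in this lemma lives. Two things are missing. First, the inverse map is never actually constructed: ``peeling off the reinserted value'' is not well-defined from $(w,a)$ alone, because you do not know which entry was reinserted or which subsequence was shifted. The paper's resolution is to take, in $\bar w=w_1\cdots w_a$, the increasing subsequence of length $n-s+1$ with the \emph{largest} lexicographic index sequence (subject to jumping out of the new prefix), and shift its values back by one position; identifying this canonical subsequence as the correct one to undo is the key idea, and your sketch (``the LLI machinery identifies $l$\,'') does not supply it. Second, the two substantive combinatorial facts are asserted but not proved: (a) $\bar w$ admits no increasing subsequence of length $n-s+2$, which is exactly the inclusion $\Phi(C_{n,s,a})\subseteq C_{n,s-1}\times\{a\}\setminus\bigcup_{b\le a}C_{n,s-1,b}$; and (b) after shifting back, the resulting subsequence really is the LLI-$(n-s+1)$ subsequence of the reconstructed permutation, which is what makes $\Psi\circ\Phi=\mathrm{id}$ and hence gives injectivity. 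Both are proved in the paper by a delicate pigeonhole analysis of the coincidences between two increasing subsequences (finding consecutive coincidence points $x_{p_1}=y_{p_2}$, $x_{q_1}=y_{q_2}$ with $q_2-p_2>q_1-p_1$ and splicing to contradict LLI-minimality); this analysis is the bulk of the proof and cannot be relegated to ``routine bookkeeping'' or an example.

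A smaller but real gap: for the displayed set identity you also need the reverse inclusion, i.e.\ that \emph{every} $(w,a)$ with no increasing subsequence of length $n-s+2$ among $w_1,\ldots,w_a$ actually lies in the image of $\Phi$. Your proposal only gestures at this (``peeling off\ldots lands inside $C_{n,s,a}$''); it requires running the explicit inverse $\Psi$ on such a $w$ and verifying that the output lies in $C_{n,s,a}$ with the prescribed LLI data, which again rests on the coincidence argument of point (b). In short: the skeleton matches the paper, but the proof of this lemma \emph{is} those two subsequence-comparison arguments, and they are absent.
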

\begin{proof}
Let again $\pi \in C_{n,s,a}$ and $\Phi(\pi) = (w,a)$.
It is clear by the LLI condition that we must have $\pi_{i_{l+1}} <\pi_{l+1}$ as otherwise $$\pi_1,\ldots,\pi_{l+1},\pi_{i_{l+1}},\ldots,\pi_{i_{n-s}}$$ would be increasing of length $n-s+1$ and will have lexicographically smaller indices. Then the first $n-s+1$ elements of $w$ will be increasing and $w\in C_{n,s-1}$. 

To show injectivity and describe the coimage we will describe the inverse map $\Psi:\Phi(C_{n,s,a})\rightarrow C_{n,s,a}$. Let $(w,a) \in \Phi(C_{n,s,a})$ with $(w,a)=\Phi(\pi)$ for some $\pi \in C_{n,s,a}$ and let $\bar{w}=w_1\cdots w_a$.

 Notice that $\bar{w}$ cannot have an increasing subsequence $\{y_i\}$ of length $n-s+2$. To show this, let $\{x_i\}$ be the subsequence of $w$ which was the LLI-$(n-s+1)$ sequence of $\pi$. If there were a sequence $\{y_i\}$, this could have happened only involving the forward shifts of ${x}_i$ and some of the $x'$s and $y'$s should coincide (in the beginning at least). By the pigeonhole principle there must be two pairs of indices $p_1<q_1$ and $p_2<q_2$ ($q_1$ and $q_2$ might be auxiliary, \textit{i.e.} off the end of $\bar{w}$), such that in $\bar{w}$ we have $x_{p_1}=y_{p_2}$ and $x_{q_1} = y_{q_2}$ and between them there are strictly more elements of $y$ and no more coincidences, \textit{i.e.} $q_2-p_2 >q_1-p_1$. Then $x_{p_1-1}<x_{p_1}<y_{p_2+1}$ and in $\pi$ (after shifting $\{x_i\}$ back) we will have the subsequence $x_1,\ldots,x_{p_1-1},y_{p_2+1},\ldots,y_{q_2-1},x_{q_1}=y_{q_2},\ldots,x_{n-s+1}$, which will be increasing and of length $p_1-1+q_2-p_2+n-s+1-q_1\geq n-s+1$. By the LLI property we must have that $y_{p_2+1}$ appears after $x_{p_1}$, but then $x_1,\ldots,x_{p_1-1},x_{p_1},y_{p_2+1},\ldots,y_{q_2-1},x_{q_1}=y_{q_2},\ldots,x_{n-s}$ will be increasing of length at least $n-s+1$ appearing before $\{x_i\}$ in $\pi$. This violates the other LLI condition of no $n-s+1$ increasing subsequences before $x_{n-s+1}$.  

Now let 
$\sigma = {w_1,\ldots,w_r,w_{i_{r+1}},\ldots,w_{i_{n-s+1}}}$
 with $i_{r+1}>n-s+1$ be the $(n-s+1)$-increasing subsequence of $\bar{w}$ with largest lexicographic index sequence. Let $w'$ be obtained from $w$ by assigning $w'_{i_j}=w_{i_{j-1}}$ for $r+1\leq j \leq n-s+1$, where $i_r=r$, $w'_{a_1}=w_{i_{n-s+1}}$ and then deleting the entry $w_r$ at position $r$. 

We claim that the LLI-$(n-s+1)$ sequence of $w'$ is exactly $\sigma$. Suppose the contrary and let $\{y_i\}$ be the LLI-$(n-s+1)$ subsequence of $w'$. Since $n\geq 2s$ we have $y_1=w_1=\sigma_1,\ldots, y_{i_j}=w_{i_j}=\sigma_j$ for all $1\leq j \leq l$ for some $l\geq r$. If there are no more coincidences between $y$ and $\sigma$ afterwards, then the sequence $w_1 =y_1,\ldots, w_{i_l}=y_l, y_{l+1},\ldots,y_{n-s+1}$ is increasing of length $n-s+1$ in $\bar{w}$ (in the same order) and of lexicographically larger index than $\sigma$, since the index of $y_{l+1}$ is after the index of $y_l$ in $w'$ equal to the index of $\sigma_{l+1}$ in $w$.

 Hence there must be at least one more coincidence, let $y_p=\sigma_q$ be the last such coincidence. Again, in $\bar{w}$ the sequence $\sigma_1,\ldots,\sigma_q,y_{p+1},\ldots, y_{n-s+1}$ appears in this order and is increasing with the index of $y_{p+1}$ larger than the one of $\sigma_{q+1}$ in $w$, so its length must be at most $n-s$, \textit{i.e.} $q+(n-s+1)-p\leq n-s$, so $q\leq p-1$. We see then that there are more $y'$s between $y_l$ and $y_p=\sigma_q$ than there are $\sigma'$s there, so we can apply an argument similar to the one in the previous paragraph. Namely, there are indices $p_1,q_1,p_2,q_2$, such that $y_{p_1}=\sigma_{q_1}$, $y_{p_2}=\sigma_{q_2}$ with no other coincidences between them and $q_2-q_1<p_2-p_1$. Then the sequence $\sigma_1,\ldots,\sigma_{q_1},y_{p_1+1},\ldots,y_{p_2-1},\sigma_{q_2+1},\ldots$ is increasing in this order in $w$, has length $q_1 + p_2-1 -p_1 + n-s+1 -q_2 = (n-s+1) + (p_2-p_1) -(q_2-q_1) -1 \geq n-s+1$ and the index of $y_{p_1+1}$ in $w$ is larger than the index of $\sigma_{q_1+1}$ (which is the index of $y_{p_1}$ in $w'$). We thus reach a contradiction, showing that we have found the inverse map of $\Phi$ is given by $\Psi(w,a)=w'$ and, in particular, that $\Phi$ is injective. 

We have also shown that the image of $\Phi$ consists exactly of these permutations, which do not have an increasing subsequence of length $n-s+2$ within their first $a$ elements.  Therefore the coimage of $\Phi$ is the set of permutations in $C_{n,s-1}$ with $n-s+2$ increasing subsequence within its first $a$ elements, so the ones in $C_{n,s-1,b}$ for $n-s+2 \leq b \leq a$. 
\end{proof}

We can now proceed to the proof of Theorem \ref{garsia}. We have that $C_{n,k}\setminus \R_{n,k}$ is exactly the set of permutations in $C_{n,k}$ with some increasing subsequence of length $n-k+1$, hence
$$ C_{n,k} \setminus \R_{n,k} = \bigcup_{n-k+1\leq a_1 \leq n} C_{n,k,a_1} .$$
On the other hand, applying the lemma we have that 
{\allowdisplaybreaks 
\begin{align}\label{permsets}
 \bigcup_{n-k+1\leq a_1 \leq n}& C_{n,k,a_1}  \simeq \bigcup_{n-k+1\leq a_1 \leq n} \Phi(C_{n,k,a_1}) \\
 &=\bigcup_{n-k+1\leq a_1 \leq n} \left( C_{n,k-1}\times a_1 \setminus \bigcup_{n-k+2 \leq a_2 \leq a_1} C_{n,k-1,a_2} \right) \notag \\
 &= C_{n,k-1}\times \binom{[k]}{1} \setminus \left( \bigcup_{n-k+2\leq a_2 \leq a_1 \leq n} C_{n,k-1,a_2}\times a_1 \right) \notag \\
 &\simeq C_{n,k-1}\times \binom{[k]}{1} \setminus \left( C_{n,k-1} \times \binom{[k]}{2} \setminus \left(\bigcup_{n-k+3  \leq a_3 \leq a_2 \leq a_1 \leq n} \mspace{-10mu} C_{n,k-2,a_3}\times(a_2,a_1)\right) \right)
  \notag \\
=\cdots &= C_{n,k-1}\times \binom{[k]}{1} \setminus \left(C_{n,k-2}\binom{[k]}{2} \setminus \dots \setminus \left( C_{n,k-r}\times \binom{[k]}{r} \setminus \dots\right) \dots \right) , \notag
 \end{align}
 }
 where $\simeq$ denotes the equivalence under $\Phi$ and  $\binom{[k]}{r}$ represent the $r-$tuples $(a_r,\ldots,a_1)$ where $n-k+r \leq a_r \leq a_{r-1}\leq \cdots \leq a_1\leq n$. Since $\# C_{n,k-r} \times \binom{[k]}{r} = \binom{n}{k-r}(k-r)! \binom{k}{r}$, Theorem \ref{garsia} follows. 
 
As for the $q-$analogue, Theorem \ref{garsiaqanalogue}, it follows immediately from the set equalities \eqref{permsets} and Lemma \ref{q-inv} \, once we realize that the map $\Phi$ does not change the major index of the inverse permutation, as shown in the following small lemma. 
\begin{lemma}
Let $D(w)=\{i+1 \text{ before } i \text{ in }w\}$. Then $D(w) = D(\Phi(w))$ and thus $\maj(w^{-1}) = \sum_{i \in D(w)} i$ remains the same after applying $\Phi$. 
\end{lemma}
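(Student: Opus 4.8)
The plan is to track the set $D(w)=\{i : i+1 \text{ appears before } i \text{ in } w\}$ through the explicit surgery that defines $\Phi$, and to show that no element is created or destroyed. Recall that $\Phi$ takes $\pi \in C_{n,s}\setminus\R_{n,s}$ with LLI-$(n-s+1)$ subsequence $\sigma = \pi_1,\ldots,\pi_l,\pi_{i_{l+1}},\ldots,\pi_{i_{n-s+1}}$ and produces $w$ by (i) replacing, for $l+1\le j\le n-s$, the entry at position $i_j$ by the entry formerly at position $i_{j+1}$ (a ``forward shift'' of the tail of $\sigma$), and (ii) inserting the value $\pi_{i_{l+1}}$ immediately after position $l$. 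I would first reduce to the claim that $D$ is unchanged, since once $D(w)=D(\Phi(w))$ is established, $\maj(w^{-1})=\sum_{i\in D(w)}i$ is immediate from the characterization $D(w^{-1})=\{i : w^{-1}_i > w^{-1}_{i+1}\} = \{i : i+1 \text{ before } i \text{ in } w\}$ already used in the proof of Theorem~\ref{garsiaqanalogue}.

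The key observation is that $D(w)$ depends only on the relative order in which the \emph{values} $1,2,\ldots,n$ occur in the word, not on the absolute positions; moving a value to a different slot only matters if it changes whether some $i+1$ precedes $i$. So the main step is to check that each of the two operations (i) and (ii) preserves, for every $i$, the relation ``$i+1$ occurs before $i$''. For the insertion step (ii): the inserted value is $v=\pi_{i_{l+1}}$, and by the argument in the previous lemma $v < \pi_{l+1}$ and (being part of the increasing subsequence $\sigma$ started by $\pi_1,\ldots,\pi_l$) $v > \pi_l$; moreover $v$ is simultaneously being removed from its old position $i_{l+1}$ by step (i). The crucial point is that in $\pi$ the value $v$ sits to the right of all of $\pi_1,\ldots,\pi_l$ and (since $\sigma$ is the LLI sequence and the first $n-s$ entries of $\pi$ are increasing) to the right of $\pi_{l+1},\ldots,\pi_{n-s}$ as well; relocating $v$ from its old slot to the slot just after $\pi_l$ therefore only changes its position relative to values $\pi_{l+1},\ldots,\pi_{n-s}$, and I would show that for each such $\pi_m$ the pair $\{v-1,v\}$ or $\{v,v+1\}$ contributes identically to $D$ before and after — because $v+1$, if it equals some $\pi_m$ with $l+1\le m\le n-s$, would itself continue an increasing subsequence and contradict minimality of $\sigma$'s indices, and similarly $v-1$ cannot be among those entries. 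For the shift step (i), one argues that replacing the entry at $i_j$ by the entry at $i_{j+1}$ is, value-wise, just a cyclic rotation of the subword $(\pi_{i_{l+1}},\ldots,\pi_{i_{n-s+1}})$ within fixed positions; since these values are increasing, $\pi_{i_{l+1}} < \pi_{i_{l+2}} < \cdots < \pi_{i_{n-s+1}}$, and I would verify that the only descents-of-the-inverse potentially affected involve consecutive integers among or adjacent to this increasing run, and that the LLI hypotheses force those to behave the same way.

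I expect the main obstacle to be the bookkeeping around consecutive integers $i,i+1$ one of which lies in the shifted/reinserted block and the other outside it: one must rule out that the move crosses such a pair past each other. The clean way to handle this is to reinterpret the whole operation on the level of the inverse permutation $w^{-1}$ versus $\pi^{-1}$ — there $\Phi$ becomes a statement about which position precedes which, and the ``$i+1$ before $i$'' condition becomes $w^{-1}(i) > w^{-1}(i+1)$ — but I think the most transparent route is the direct one: list the at most $n-s$ values whose position is altered, note they form (together with the reinserted value) a set that is ``order-closed upward within the first-row range'' by the LLI property, and conclude that consecutive integers are never separated across the moved block. Once that combinatorial lemma is in place, $D(w)=D(\Phi(w))$ follows by inspection and the preservation of $\maj(w^{-1})$ is automatic, which combined with the set identities \eqref{permsets} and Lemma~\ref{q-inv} completes the permutation-only proof of Theorem~\ref{garsiaqanalogue}.
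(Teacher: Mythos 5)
You have the right picture of what $\Phi$ does: each value $\pi_{i_j}$ in the tail of the LLI sequence $\sigma$ jumps leftward to position $i_{j-1}$ (with $i_l:=l$ for the reinserted value), so the only pairs $\{i,i+1\}$ whose relative order could possibly change are those in which exactly one member equals some $\pi_{i_j}$ and the other sits at a position strictly between $i_{j-1}$ and $i_j$. But your proposal stops exactly where the proof has to start: the sentence ``the LLI hypotheses force those to behave the same way'' is the entire content of the lemma and is left unproved. The missing step is a one-line exchange argument, which is what the paper's proof consists of: if $\pi_p=i$ with $p=i_r$ and $\pi_q=i+1$ with $i_{r-1}<q<i_r$ (or symmetrically with $i$ and $i+1$ swapped), then because $\pi_q$ differs from $\pi_{i_r}$ by exactly $1$ we get $\pi_{i_{r-1}}<\pi_q<\pi_{i_{r+1}}$, so $\pi_1,\ldots,\pi_{i_{r-1}},\pi_q,\pi_{i_{r+1}},\ldots,\pi_{i_{n-s+1}}$ is increasing of the same length $n-s+1$ with a lexicographically smaller index sequence, contradicting the LLI property. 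Without this verification (or an equivalent one) the lemma is not established.

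There is also a concrete false step in the part you do carry out. For the reinserted value $v=\pi_{i_{l+1}}$ you assert that moving it to the slot after $\pi_l$ ``only changes its position relative to values $\pi_{l+1},\ldots,\pi_{n-s}$.'' This fails whenever $i_{l+1}>n-s+1$: for $\pi=12786345\in C_{8,4}$ the LLI-$5$ sequence is $1,2,3,4,5$ at positions $1,2,6,7,8$, so $v=3$ travels from position $6$ to position $3$ and also crosses $\pi_5=6$, which is not among $\pi_3,\pi_4$. Consequently your check that $v\pm1$ cannot lie among $\pi_{l+1},\ldots,\pi_{n-s}$ covers too few positions; the exchange argument must be applied to every position strictly between $l$ and $i_{l+1}$, and likewise, for each shifted value $\pi_{i_j}$, to every position strictly between $i_{j-1}$ and $i_j$. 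The proposed global description of the moved values as ``order-closed upward within the first-row range'' is neither a property they have nor one that is needed; the correct statement is the purely local one above about the neighbours $\pi_{i_j}\pm1$ of each moved value.
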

\begin{proof}
To see this, notice that  $i$ and $i+1$ could hypothetically change their relative order after applying $\Phi$ only if exactly one of them is in the LLI-$(n-s+1)$ sequence of $w$, denote this sequence by $\sigma = w_1,\dots,w_{i_{n-s+1}}$. 

Let $w_p=i$ and $w_q=i+1$. We need to check only the cases when $p=i_r$ and $i_{r-1}<q<i_r$ or $q =i_r$ and $i_{r-1}<p<i_r$, since otherwise $i$ and $i+1$ preserve their relative order after shifting $\sigma$ one step forward by applying $\Phi$. In either case, we see that the sequence $w_1,\ldots,w_{i_{r-1}},w_p\text{ (or } w_q), w_{i_{r+1}}, \ldots, w_{i_{n-s+1}}$ is increasing of length $n-s+1$ in $w$ and has lexicographically smaller indices than $\sigma$, violating the LLI property. Thus these cases are not possible and the relative order of $i$ and $i+1$ is preserved, so $D(w) = D(\Phi(w))$.
\end{proof}

We now have that the equalities and equivalences in \eqref{permsets} are equalities on the sets $D(w)$ and so preserve the $\maj(w^{-1})$ statistic, leading directly to Theorem \ref{garsiaqanalogue}.

\bibliographystyle{abbrvnat}
\bibliography{sample}
\label{sec:biblio}

\begin{bibdiv}
\begin{biblist}

\bib{Garsiatalk}{article}{
    author = {Garsia, Adriano},
    title = {A new recursion in the theory of Macdonald polynomials},
    journal = {	http://math.ucsd.edu/~garsia/lectures/MIT-09-newRec.pdf},
}

\bib{Garsiapaper}{article}{
   author={Garsia, A. M.},
   author={Goupil, A.},
   title={Character polynomials, their $q$-analogs and the Kronecker
   product},
   journal={Electron. J. Combin.},
   volume={16},
   date={2009},
   number={2, Special volume in honor of Anders Bjorner},
   pages={Research Paper 19, 40},
   issn={1077-8926},
   review={\MR{2576382}},
}

\bib{EC1}{book}{
   author={Stanley, Richard P.},
   title={Enumerative combinatorics. Vol. 1},
   series={Cambridge Studies in Advanced Mathematics},
   volume={49},
   publisher={Cambridge University Press},
   place={Cambridge},
   date={1997},
   pages={xii+325},
   isbn={0-521-55309-1},
   isbn={0-521-66351-2},
   review={\MR{1442260 (98a:05001)}},
}

\bib{EC2}{book}{
   author={Stanley, Richard P.},
   title={Enumerative combinatorics. Vol. 2},
   series={Cambridge Studies in Advanced Mathematics},
   volume={62},
   publisher={Cambridge University Press},
   place={Cambridge},
   date={1999},
   pages={xii+581},
   isbn={0-521-56069-1},
   isbn={0-521-78987-7},
   review={\MR{1676282 (2000k:05026)}},
}

\end{biblist}
\end{bibdiv}

\end{document}